\newtheorem{thm}{Theorem}[section]
\newtheorem{lem}[thm]{Lemma}
\newtheorem{pro}[thm]{Proposition}
\theoremstyle{definition}
\newtheorem{exa}[thm]{Example}
\newtheorem{rem}[thm]{Remark}
\begin{document}

\begin{center}
{\Large  Concentration of norms of random vectors with independent $p$-sub-exponential coordinates}
\end{center}
\begin{center}
{\sc Krzysztof Zajkowski\footnote{The author  declare that there is no conflict of interest.}}\\
Faculty of Mathematics, University of Bialystok \\ 
Ciolkowskiego 1M, 15-245 Bialystok, Poland \\ 
kryza@math.uwb.edu.pl 
\end{center}

\begin{abstract}
We present examples of $p$-sub-exponential random variables for any positive $p$. We prove two types of concentration of  standard $p$-norms 
($2$-norm is the Euclidean norm) of random vectors with independent $p$-sub-exponential coordinates around the Lebesgue $L^p$-norms of these $p$-norms of random vectors. In the first case $p\ge 1$, our estimates depend on the dimension $n$ of random vectors. But in the second one for $p\ge 2$, with an additional assumption, we get an estimate that does not depend on $n$. In other words, we generalize some know concentration results  in the Euclidean case to cases of the $p$-norms of random vectors with independent $p$-sub-exponential coordinates.
\end{abstract}

{\it 2020 Mathematics Subject Classification: 
60E15, 
46E30}.

{\it Key words: Concentration inequalities, Sub-Gaussian and sub-exponential random variables, Orlicz (Luxemburg) norms, 
Convex conjugates, 
Bernstein inequalities.}

\section{Introduction and results}

One of the most interesting phenomena in high-dimensional probability
is the concentration of distributions of certain functions from a large number of random variables around some deterministic value (e.g., its expected value or median, etc.); see \cite{Led} or \cite{Ver}, for instance.
In the last two decades, with the development of modern data collection, this interest has grown even more. 
Gaussian random variables are the basic tool for modeling these data.
Kahane in [10] introduced the space of sub-Gaussian random variables, which expands the possibilities of research.
It is sometimes natural to use exponential random variables and, more generally, sub-exponential variables to consider the problems under study, but more generally, it is appropriate to consider so-called, $p$-sub-exponential (sub-Weibull) random variables. 
Recently, it appears papers that take up these issues;
see  \cite{GSS} or \cite{Zang}, for example.

Our aim is to generalize the theorem on the concentration of the Orlicz norm of a vector with independent sub-Gaussian coordinates
(see Vershynin \cite[Th.3.1]{Ver}) 
to the case of the proper Orlicz norm of vector with $p$-sub-exponential coordinates.


Let $p$ be a positive number. We  consider random variables (r.v.s) with $p$-{\it sub-exponential tail decay}, i.e.,  random variables $X$ for which there exist two positive constants $c$ and $C$ such that
$$
\mathbb{P}(|X|\ge t)\le c\exp\big(-(t/C)^p\big)
$$
for all $t\ge 0$. Such random variables we will call $p$-{\it sub-exponential}.
\begin{exa}
The exponentially distributed random variable $X\sim Exp(1)$  has an exponential tail decay that is 
$\mathbb{P}(X\ge t)= \exp(-t)$.
It is the example of a random variable with a $1$-sub-exponential tail decay; $c=C=1$. 
Consider a random variable $Y_p=\theta X^{1/p}$ for some $p,\theta>0$. Observe that for $t\ge 0$ 
$$
\mathbb{P}(Y_p\ge t)=\mathbb{P}\big(\theta X^{1/p}\ge t\big)=\mathbb{P}\big(X\ge (t/\theta)^{p}\big)=\exp\big(-(t/\theta)^{p}\big).
$$
The random variable $Y_p$ has $p$-sub-exponential tail decay; $c=1$ and $C=\theta$. Let us note that $Y_p$ has the Weibull distribution with the shape parameter $p$ and the scale parameter $\theta$. One can say that random variables with Weibull distributions form model examples of r.v.s  with $p$-sub-exponential tail decay.
\end{exa}
Because it is known that random variables with the Poisson and the geometric distributions have $1$-sub-exponential tail decay then,  in a similar way as above, we can form another families of $p$-sub-exponential random variables for any $p>0$.

A more interesting case, which is independently interesting, occurs when we start with the Gaussian distribution.
\begin{exa}
Let $G$ denote a random variable with the standard normal distribution. It is  known that for tails of such variables hold the estimate: 
$$
\mathbb{P}(|G|\ge t)\le\exp(-t^2/2),
$$
for $t\ge 0$ (see for instance \cite[Prop.2.2.1]{Dudley}). Defining now $Y_p=\theta|G|^{2/p}$, by the above estimate, we get 
$$
\mathbb{P}(Y_p\ge t)=\mathbb{P}\big(\theta|G|^{2/p}\ge t\big)=\mathbb{P}\big(|G|\ge (t/\theta)^{p/2}\big)\le\exp\Big(-\big[t/(2^{1/p}\theta)\big]^p\Big).
$$
In other words, we obtain another family of r.v.s with $p$-sub-exponential tail decay; $c=1$ and $C=2^{1/p}\theta$. 
\end{exa}

Define now a symmetric random variable $G_p=\operatorname{sgn}(G)|G|^{2/p}$ ($p>0$); see \cite{Zaj1} for more details. One can calculate that its density function has the form
$$
f_{p}(x)=\frac{p}{2\sqrt{2\pi}}|x|^{p/2-1}e^{-|x|^{p}/2}.
$$
Let us emphasize that for $p=2$ we get the density of the standard normal distribution.
Observe that $\mathbb{E}G_p=0$ and $\mathbb{E}|G_p|^p=\mathbb{E}G^2=1$. 
For any $p>0$ we will call the random variable $G_p$  the {\it model $p$-normal} ($p$-{\it Gaussian}) 
and write $G_p\sim\mathcal{N}_p(0,1)$, where the first parameter denote the mean value but the second one the absolute $p$-th moment of $G_p$. 



The $p$-sub-exponential random variables can be characterized by finiteness of 
the $\psi_p$-norms defined as follows
$$
\|X\|_{\psi_p}:=\inf \big\{K>0:\; \mathbb{E}\exp(|X/K|^p)\le 2\big\};
$$
according to the standard convention $\inf\emptyset=\infty$. 
We will call the above functional $\psi_p$-norm but let us emphasize that only for $p\ge 1$ it is a proper norm. From $0<p<1$ it is the so-called quasi-norm. It does not satisfy the triangle inequality (see Appendix A in \cite{GSS} for more details). 

Let us emphasize that $p$-sub-exponential random variables $X$ satisfy the following $p$-sub-exponential tail decay:
$$
\mathbb{P}(|X|\ge t) \le 2\exp\Big(-(t/\|X\|_{\psi_p})^p\Big);
$$
see for instance \cite[Lem.2.1]{Zaj}.

For $x=(x_i)_{i=1}^n\in \mathbb{R}^n$ and $p\ge 1$, let $|x|_p$ denote the  $p$-norm of $x$, i.e., $|x|_p=(\sum_{i=1}^n|x_i|^p)^{1/p}$. For a random variable $X$, by $\|X\|_{L^p}$ we will denote the Lebesgue norm of $X$, i.e., $\|X\|_{L^p}=(\mathbb{E}|X|^p)^{1/p}$. 

From now on let $X=(X_i)_{i=1}^n$ denote a random vector with real coordinates. 
We will be interested in the concentration of the norm $|X|_p$ around $\||X|_p\|_{L^p}=(\sum_{i=1}^n\mathbb{E}|X_i|^p)^{1/p}$ in spaces of $p$-sub-exponential random variables. 
In other words, we will be interested in an estimate of the norm $\||X|_p-\||X|_p\|_{L^p}\|_{\psi_p}$.

I owe the first result of this type  to the anonymous reviewer of the previous version of this paper, to whom I hereby express my thanks.
\begin{pro}
\label{KwaP}
Let $p\ge 1$ and $X=(X_1,...,X_n)\in\mathbb{R}^n$ be a random vector with
independent $p$-sub-exponential coordinates.
Then
$$
\||X|_p-\||X|_p\|_{L^p}\|_{\psi_p}\le (2C\sqrt{n})^{1/p} K_p,
$$
where 
$K_p=\max_{1\le i\le n}\|X_i\|_{\psi_p}$ and $C$ is some universal constant. 
\end{pro}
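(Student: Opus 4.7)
The plan is to linearize the problem by working with $S := |X|_p^p = \sum_i |X_i|^p$ in place of $|X|_p$, since $S$ is a sum of independent real-valued random variables to which Bernstein-type machinery applies directly. The key elementary fact I would use is that for $a,b\ge 0$ and $p\ge 1$,
$$|a^{1/p}-b^{1/p}|^p \le |a-b|,$$
which follows from the superadditivity $(u+v)^p\ge u^p+v^p$ of $x\mapsto x^p$ on $[0,\infty)$. Applied to $a=S$ and $b=\mathbb{E}S$ this yields the pointwise bound
$$\bigl||X|_p - \||X|_p\|_{L^p}\bigr| \le |S-\mathbb{E}S|^{1/p}.$$
A straightforward change of variable $K\mapsto K^p$ in the definition of $\|\cdot\|_{\psi_p}$ shows that $\||Y|^{1/p}\|_{\psi_p} = \|Y\|_{\psi_1}^{1/p}$ for any random variable $Y$, so the whole task reduces to proving the scalar estimate
$$\|S-\mathbb{E}S\|_{\psi_1}\le C\, n^{1/2}\, K_p^p$$
with a universal constant $C$.

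Next, I would analyze the summands $Y_i := |X_i|^p - \mathbb{E}|X_i|^p$. The same change-of-variable identity gives $\||X_i|^p\|_{\psi_1}=\|X_i\|_{\psi_p}^p$, and a routine centering estimate (legitimate since $\|\cdot\|_{\psi_1}$ is a genuine norm) delivers $\|Y_i\|_{\psi_1}\le C_1 K_p^p$. Because the $Y_i$ are independent, centered, and sub-exponential, the classical Bernstein inequality for sums of sub-exponentials yields the mixed Gaussian/exponential tail
$$\mathbb{P}(|S-\mathbb{E}S|\ge t) \le 2\exp\Bigl(-c\min\Bigl(\tfrac{t^2}{V},\tfrac{t}{M}\Bigr)\Bigr),$$
where $V=\sum_i \|Y_i\|_{\psi_1}^2\le n\, C_1^2 K_p^{2p}$ and $M=\max_i\|Y_i\|_{\psi_1}\le C_1 K_p^p$. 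Integrating this tail (equivalently, via the moment characterization $\|Z\|_{\psi_1}\asymp \sup_{q\ge 1}q^{-1}\|Z\|_{L^q}$) gives $\|S-\mathbb{E}S\|_{\psi_1}\le C_2(\sqrt{V}+M)\le C_3\sqrt{n}\, K_p^p$, the $\sqrt{V}$ term dominating whenever $n\ge 1$.

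The hard part will be the Bernstein step: converting the two-regime tail bound into a clean $\psi_1$-estimate that scales like $\sqrt{n}$ rather than like $n$. This requires balancing the Gaussian regime $t\le nM$ against the exponential regime $t\ge nM$ and observing that the variance-type quantity $\sqrt{V}$ is what governs $\|\cdot\|_{\psi_1}$. Once this is in hand, raising everything to the power $1/p$ gives
$$\||X|_p-\||X|_p\|_{L^p}\|_{\psi_p} \le \|S-\mathbb{E}S\|_{\psi_1}^{1/p} \le (C_3\sqrt{n}\, K_p^p)^{1/p} = C_3^{1/p}\, n^{1/(2p)}\, K_p,$$
which is the claimed inequality with $C=C_3$.
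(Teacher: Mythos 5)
Your proposal is correct, and its skeleton coincides with the paper's: both pass to $S=|X|_p^p$, both rest on the concavity inequality $|a^{1/p}-b^{1/p}|^p\le|a-b|$ together with the identity $\||Y|^{1/p}\|_{\psi_p}=\|Y\|_{\psi_1}^{1/p}$ (the paper's Lemma \ref{norms}), and both reduce everything to the scalar estimate $\|S-\mathbb{E}S\|_{\psi_1}\le C\sqrt{n}\,K_p^p$. Where you diverge is in how that $\sqrt{n}$ estimate is proved. The paper works at the level of moment generating functions: it introduces the norm $\tau_{\varphi_1}$, shows that for independent centered sub-exponentials $\tau_{\varphi_1}^2(\sum_i Y_i)\le\sum_i\tau_{\varphi_1}^2(Y_i)$ (the ``approximate rotation invariance''), and then transfers back to $\|\cdot\|_{\psi_1}$ using the equivalence of the two norms on centered sub-exponential variables; this never leaves the world of norms. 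You instead go through Bernstein's two-regime tail bound and then convert the tail back into a $\psi_1$-estimate. That conversion is exactly the step you flag as delicate, and rightly so: the Gaussian regime $2\exp(-ct^2/V)$ does not dominate $2\exp(-t/L)$ for small $t$, so one cannot simply read off condition 2 of the paper's Lemma \ref{charlem}; the detour through the moment characterization $\|Z\|_{\psi_1}\asymp\sup_{q\ge1}q^{-1}\|Z\|_{L^q}$ (condition 3 of that lemma, or Vershynin's Proposition 2.7.1) does close this, giving $\|S-\mathbb{E}S\|_{\psi_1}\lesssim\sqrt{V}+M\lesssim\sqrt{n}K_p^p$. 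So your route is sound and uses only off-the-shelf facts, at the cost of a tail-to-norm round trip that the paper's MGF argument avoids entirely; the paper's route is shorter but requires setting up the $\tau_{\varphi_1}$ machinery and its equivalence with $\|\cdot\|_{\psi_1}$.
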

Let us emphasize that, for $p\ge 2$, we can remove on the right hand side the factor $n^{1/(2p)}$ but under an additional assumption that $p$-th moments of the coordinates are the same, i.e., $\mathbb{E}|X_1|^p=\mathbb{E}|X_i|^p$, $i=2,3,...,n$. Let us note that then 
$\||X|_p\|_{L^p}=n^{1/p}\|X_1\|_{L^p}$.
The main theorem of this paper is the following.
\begin{thm}
\label{mthm}
Let $p\ge 2$ and $X=(X_1,...,X_n)\in\mathbb{R}^n$ be a random vector with independent $p$-sub-exponential coordinates $X_i$ that satisfy $\mathbb{E}|X_1|^p=\mathbb{E}|X_i|^p$, $i=2,3,...,n$. Then 
$$
\||X|_p-n^{1/p}\|X_1\|_{L^p}\|_{\psi_p}\le 6^{1/p}C\Big(\frac{K_p}{\|X_1\|_{L^p}}\Big)^{p-1}K_p,
$$
where $K_p:=\max_{1\le i\le n}\|X_i\|_{\psi_p}$ and $C$ is a universal constant. 
\end{thm}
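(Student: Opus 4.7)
The strategy is to pass to the $p$-th power. Set $S := |X|_p^p = \sum_i |X_i|^p$, $\mu := \mathbb{E} S = n\|X_1\|_{L^p}^p$, $Z := S-\mu$, and $T := |X|_p - \mu^{1/p}$; abbreviate the target scale as $L := K_p^p/\|X_1\|_{L^p}^{p-1}$. I plan to prove the tail bound
$$
\mathbb{P}(|T|\ge t)\le C_1\exp(-c(t/L)^p)\quad \text{for all } t\ge 0,
$$
and then convert it to a $\psi_p$-norm estimate via the layer-cake computation
$$
\mathbb{E}\exp(\lambda|T|^p)=1+\lambda p\int_0^\infty t^{p-1}e^{\lambda t^p}\mathbb{P}(|T|\ge t)\,dt,
$$
choosing $\lambda$ so that the right-hand side is at most $2$; tracking the constants gives the factor $6^{1/p}C$.

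Two deterministic comparisons between $|T|$ and $|Z|$ drive the argument. The global estimate $|T|^p\le |Z|$ follows from $(a-b)^p\le a^p-b^p$ for $0\le b\le a$ and $p\ge 1$ (itself equivalent to $(1-x)^p+x^p\le 1$ on $[0,1]$) applied with $a,b\in\{S^{1/p},\mu^{1/p}\}$. The sharper linear estimate $|T|\le 2^{(p-1)/p}|Z|/(p\mu^{(p-1)/p})$ on the event $\{|Z|\le\mu/2\}$ comes from the mean value theorem for $x\mapsto x^{1/p}$ on $[\mu/2,\infty)$. To control the right-hand sides, note that $\||X_i|^p\|_{\psi_1}\le \|X_i\|_{\psi_p}^p\le K_p^p$ by definition of the $\psi_p$-norm; Bernstein's inequality for centered sums of independent sub-exponential variables then yields
$$
\mathbb{P}(|Z|\ge u)\le 2\exp\!\Big(-c\min\!\Big(\tfrac{u^2}{nK_p^{2p}},\,\tfrac{u}{K_p^p}\Big)\Big).
$$

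I would then split $\mathbb{P}(|T|\ge t)\le \mathbb{P}(|T|\ge t,\,|Z|\le\mu/2)+\mathbb{P}(|Z|>\mu/2)$. On the first event the linear inequality forces $|Z|\ge ct\mu^{(p-1)/p}$; on the second, refining by the global bound $|T|^p\le|Z|$, one has $|Z|\ge \max(t^p,\mu/2)$. Substituting $\mu=n\|X_1\|_{L^p}^p$ into the Bernstein exponent, every term takes the shape $n^\alpha(t/L)^\beta$ with $\alpha\ge 0$ (here $p\ge 2$ is crucial) and $\beta\in\{2,p\}$; combined with the easy inequality $\|X_1\|_{L^p}\le \|X_1\|_{\psi_p}\le K_p$, equivalent to $L\ge K_p$, this lets the sub-exponential branches of Bernstein be rewritten as $(t/L)^p$-style tails. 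A short case analysis on $t$, across the natural thresholds coming from the $\min$ inside Bernstein, then delivers the claimed uniform tail bound.

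The main obstacle is this final case analysis. The target decay $\exp(-(t/L)^p)$ sits strictly between the Gaussian and the sub-exponential branches of Bernstein, so neither branch in isolation gives the right shape. The rescue is the interplay of the two deterministic inequalities: the linear bound handles typical $t$ by feeding into the Gaussian branch, while the crude bound $|T|^p\le|Z|$ handles large $t$ because the substitution $u=t^p$ converts $\exp(-ct^p/K_p^p)$ into something stronger than $\exp(-(t/L)^p)$ thanks to $K_p\le L$. It is precisely because $p\ge 2$ that all the $n$-dependent prefactors appearing along the way are $\ge 1$ and can be discarded, producing a bound that is genuinely independent of the dimension $n$.
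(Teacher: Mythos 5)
Your proposal is correct, but it implements the key step differently from the paper. Both arguments pass to $|X|_p^p$, apply a Bernstein-type bound to the centered sum of the sub-exponential variables $|X_i|^p$ (with $\||X_i|^p\|_{\psi_1}=\|X_i\|_{\psi_p}^p$), and exploit $p\ge 2$ together with $\|X_1\|_{L^p}\le K_p$ to make the dimension disappear. The difference is in how the deviation of $|X|_p$ is transferred to a deviation of $|X|_p^p$: the paper avoids any event decomposition by using the single deterministic implication $|x-1|\ge\delta\Rightarrow|x^p-1|\ge\max\{\delta,\delta^p\}$ (Lemma \ref{xalfa}), feeding $\max\{\delta,\delta^p\}$ into the $\varphi_1$-form of Bernstein's inequality (Proposition \ref{Bern}) and then invoking $\varphi_1(\max\{\gamma,\gamma^p\})\ge\tfrac12\gamma^p$ (Lemma \ref{lem2}), which handles the small- and large-deviation regimes in one stroke; you instead split on $\{|Z|\le\mu/2\}$ versus $\{|Z|>\mu/2\}$ and use the mean value theorem on the first event and the concavity bound $|T|^p\le|Z|$ on the second, with the standard $\min$-form of Bernstein. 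Your route is more self-contained (it needs no bespoke lemmas about $\varphi_1$), at the cost of a union bound and a more delicate bookkeeping; the paper's route is tighter in constants and shorter. One point in your sketch deserves care: on the event $\{|T|\ge t,\ |Z|\le\mu/2\}$ with $L<t$, simply ``discarding'' the prefactors $n^{(p-2)/p}$ and $n^{(p-1)/p}$ leaves you with $\min\{(t/L)^2,(t/L)\}=t/L$, which does \emph{not} dominate $(t/L)^p$; you must instead observe that this event forces $t^p\le|T|^p\le|Z|\le\mu/2$, hence $t/L\le n^{1/p}$, and then use the prefactors via $n^{(p-2)/p}\ge(t/L)^{p-2}$ and $n^{(p-1)/p}\ge(t/L)^{p-1}$. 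With that correction the case analysis closes and the final conversion from the tail bound $2\exp(-(ct/L)^p)$ to the $\psi_p$-norm bound is exactly Remark \ref{rem1}.
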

\begin{rem}
The above theorem is the generalization of the concentration of $\psi_2$-norm of random vectors with independent sub-Gaussian coordinates 
(see Vershynin \cite[Th.3.1.1]{Ver}) to the case of $\psi_p$-norm of vectors
with $p$-sub-exponential coordinates, for $p\ge 2$.
\end{rem}

Norms are Lipschitz function given normed spaces. Concentration of Lipschitz functions on Gauss space is one of a basic example of concentration of measure phenomenon; see e.g. \cite[Th. 5.6]{BLM}, \cite[Cor. 3.2.6]{A-AGM} or, in a general form, \cite[Th. 5.3]{Led}. 

In Gauss space are investigated Lipschitz functions on $\mathbb{R}^n$ with respect to the Euclidean norm. In our approach, we investigate the $p$-norms. Because Orlicz spaces are Banach lattices then we can immediately formulate some form of concentration for Lipschitz functions $f$ with respect to $p$-norms of random vectors with $p$-sub-exponential coordinates, i.e.,
$$
\big\|f(X)-\|f\|_{\rm Lip}\||X|_p\|_{L^p}\big\|_{\psi_p}\le\|f\|_{\rm Lip}\big\||X|_p-\||X|_p\|_{L^p}\big\|_{\psi_p},
$$
where $\|f\|_{Lip}=\sup_{x\neq y}\frac{|f(x)-f(y)|}{|x-y|_p}$.
Let us note that in our approach a distribution of $f(X)$ does not concentrate around of a median of $f$ or a mean of $f(X)$ but around the value 
$\|f\|_{Lip}\||X|_p\|_{L^p}$. 

Before we proceed to the proofs of our results (Section \ref{sec3})  we first describe  more precisely  spaces of $p$-sub-exponential random variables (Section \ref{sec2}).

\section{Spaces of $p$-sub-exponential random variables}
\label{sec2}
The $p$-sub-exponential random variables characterize the following lemma whose proof, for $p\ge 1$, one can find \cite[Lem.2.1]{Zaj}. Let us emphasize that this proof is valid for any positive $p$.
\begin{lem}
\label{charlem}
Let $X$ be a random variable and $p> 0$. There exist positive constants $K,L,M$ 
such that 
the following conditions are equivalent:\\
 1. $\mathbb{E}\exp(|X/K|^p)\le 2$\;\;\;$(K\ge \|X\|_{\psi_p})$;\\
 2. $\mathbb{P}(|X|\ge t) \le 2\exp(-(t/L)^p)$ for all $t \ge 0$;\\ 
 3. $\mathbb{E}|X|^\alpha\le  2M^\alpha\Gamma\big(\frac{\alpha}{p}+1\big)$ for all $\alpha>0$.
\end{lem}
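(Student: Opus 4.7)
The plan is to prove the cyclic chain of implications $(1) \Rightarrow (2) \Rightarrow (3) \Rightarrow (1)$, which is the standard route for this sort of equivalence and keeps track of how the constants $K$, $L$, $M$ control one another.

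For $(1) \Rightarrow (2)$ I would simply apply Markov's inequality to the non-negative random variable $\exp(|X/K|^p)$: for any $t\ge 0$,
$$
\mathbb{P}(|X|\ge t)=\mathbb{P}\bigl(\exp(|X/K|^p)\ge\exp((t/K)^p)\bigr)\le 2\exp(-(t/K)^p),
$$
so $L=K$ works.

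For $(2) \Rightarrow (3)$ I would use the tail-integration identity $\mathbb{E}|X|^\alpha=\int_0^\infty\alpha t^{\alpha-1}\mathbb{P}(|X|\ge t)\,dt$, insert the bound from $(2)$, and perform the substitution $u=(t/L)^p$. This turns the integral into $\frac{2\alpha L^\alpha}{p}\int_0^\infty u^{\alpha/p-1}e^{-u}\,du=\frac{2\alpha L^\alpha}{p}\Gamma(\alpha/p)$, and the identity $\alpha\Gamma(\alpha/p)/p=\Gamma(\alpha/p+1)$ gives $\mathbb{E}|X|^\alpha\le 2L^\alpha\Gamma(\alpha/p+1)$, so $M=L$ works.

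For $(3) \Rightarrow (1)$ I would expand the exponential as a power series and apply $(3)$ with $\alpha=pk$, using $\Gamma(k+1)=k!$ to collapse the series to a geometric one:
$$
\mathbb{E}\exp(|X/K|^p)=\sum_{k=0}^\infty\frac{\mathbb{E}|X|^{pk}}{k!\,K^{pk}}\le 1+2\sum_{k=1}^\infty\bigl(M/K\bigr)^{pk}=1+\frac{2(M/K)^p}{1-(M/K)^p},
$$
valid for $K>M$. Requiring the right-hand side to be at most $2$ boils down to $3(M/K)^p\le 1$, so $K=3^{1/p}M$ suffices. I expect this last step to be the only mildly delicate one, because one must be careful about the geometric-series bookkeeping and the choice of $K$ in terms of $M$; the first two implications are essentially bookkeeping around Markov and the layer-cake formula.
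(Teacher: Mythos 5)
Your proof is correct and follows the standard cyclic chain $(1)\Rightarrow(2)\Rightarrow(3)\Rightarrow(1)$; the paper itself does not reproduce a proof but defers to \cite[Lem.2.1]{Zaj}, where the same route is taken. Note that your constant $K=3^{1/p}M$ combined with $M=L$ from the second step recovers exactly the bound $\|X\|_{\psi_p}\le 3^{1/p}L$ stated in Remark \ref{rem1}, which confirms your bookkeeping.
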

\begin{rem}
\label{rem1}
The definition of $\psi_p$-norm is based on condition 1. Let us notice that if condition 2 is satisfied with some constant $L$ then 
$\|X\|_{\psi_p}\le 3^{1/p}L$ (compare \cite[Rem.2.2]{Zaj}).
\end{rem}
Let $L_0$ denote the space of all random variables defined on a given probability space. By $L_{\psi_p}$ we will denote the space of random variables with finite $\psi_p$-norm:
$$
L_{\psi_p}:=\{X\in L_0:\;\|X\|_{\psi_p}<\infty\}.
$$
For $\psi_p$-norms one can formulate the following lemma (compare \cite[Corollary 4]{Zang}).
\begin{lem}
\label{norms}
Let $p,r>0$ and $X\in L_{\psi_{pr}}$ then $|X|^p\in L_{\psi_{r}}$ and 
$\||X|^p\|_{\psi_{r}}=\|X\|^p_{\psi_{pr}}$.
\end{lem}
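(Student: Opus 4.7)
The plan is to derive the identity directly from the definition of the $\psi_r$-norm, exploiting the fact that raising to the $p$-th power is a monotone bijection on $(0,\infty)$, so the infimum defining $\||X|^p\|_{\psi_r}$ transforms cleanly into the one defining $\|X\|_{\psi_{pr}}$.

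First I would write out
$$
\||X|^p\|_{\psi_r} = \inf\bigl\{K>0:\;\mathbb{E}\exp\bigl(\bigl||X|^p/K\bigr|^r\bigr)\le 2\bigr\}
$$
and simplify the exponent: $\bigl||X|^p/K\bigr|^r = |X|^{pr}/K^r$. The key substitution is $K = L^p$ (equivalently $L = K^{1/p}$), which turns $|X|^{pr}/K^r$ into $|X/L|^{pr}$. Since $p>0$, the map $L\mapsto L^p$ is a strictly increasing bijection of $(0,\infty)$ onto itself, so this substitution is a change of variables in the infimum.

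Next I would show that the two constraint sets correspond: $\mathbb{E}\exp(|X|^{pr}/K^r)\le 2$ holds exactly when $\mathbb{E}\exp(|X/L|^{pr})\le 2$, with $L=K^{1/p}$. Therefore
$$
\||X|^p\|_{\psi_r} = \inf\bigl\{L^p:\;L>0,\;\mathbb{E}\exp(|X/L|^{pr})\le 2\bigr\} = \Bigl(\inf\bigl\{L>0:\;\mathbb{E}\exp(|X/L|^{pr})\le 2\bigr\}\Bigr)^p = \|X\|_{\psi_{pr}}^p,
$$
where the middle equality uses monotonicity of $L\mapsto L^p$ to pull the $p$-th power outside the infimum. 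The inclusion $|X|^p\in L_{\psi_r}$ is then immediate from the assumption $\|X\|_{\psi_{pr}}<\infty$.

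There is no real obstacle here; the only point that needs a moment's care is justifying that the infimum and the $p$-th power commute, which reduces to the elementary monotonicity of $t\mapsto t^p$ on $(0,\infty)$ for $p>0$ (and the convention $\inf\emptyset=\infty$ handles the degenerate case when no admissible $K$ exists, so the equality $\||X|^p\|_{\psi_r}=\|X\|_{\psi_{pr}}^p$ remains valid in $[0,\infty]$).
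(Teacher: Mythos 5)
Your proof is correct and rests on the same observation as the paper's, namely the identity $\bigl||X|^p/K^p\bigr|^r=|X/K|^{pr}$ under the substitution $K\mapsto K^p$; you merely carry out the resulting change of variables in the infimum explicitly, where the paper evaluates at the optimal $K$ and declares the equivalence. No issues.
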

\begin{proof}
Let $K=\|X\|_{\psi_{pr}}>0$. Then 
$$
2=\mathbb{E}\exp(|X/K|^{pr})=\mathbb{E}\exp\big(\big||X|^p/K^p\big|^{r}),
$$ 
which is equivalent to the conclusion of the lemma. 
\end{proof}
Let us emphasize that if we know the moment generating function of a given random variable $|X|$  then we can calculate the  $\psi_p$-norm of $|X|^{1/p}$.
\begin{exa}
Let $X\sim Exp(1)$. The moment generating function of $X$ equals $\mathbb{E}\exp(tX)=1/(1-t)$ for $t<1$. Let us observe that 
$$
\mathbb{E}\exp(X/K)=\frac{1}{1-1/K}\le 2
$$ 
if $K\ge 2$. It means that $\|X\|_{\psi_1}=2$. In consequence, Weibull distributed random variables, with the shape parameter $p$ and the scale parameter $\theta$,  have the $\psi_p$-norms:
$$
\|\theta X^{1/p}\|_{\psi_p}=\theta\|X\|_{\psi_1}^{1/p}=\theta 2^{1/p}.
$$

Let us note that starting with the moment generating function of $G^2$ of the form $(1-2t)^{-1/2}$ ($t<1/2$), similarly as above, one can calculate that 
$\|G^2\|_{\psi_1}=\|G\|_{\psi_2}^2=8/3$ and $\|G_p\|_{\psi_p}=(8/3)^{1/p}$.
\end{exa}
Let us notice that by Jensen's inequality we get, for $p\ge 1$, that the $\psi_p$-norm of the expected value of $p$-sub-exponential random variable is not less than the $\psi_p$-norm of this random variable itself, since
$$
2=\mathbb{E}\exp\Big(\big|X/\|X\|_{\psi_p}\big|^p\Big)\ge\exp\Big(\big|\mathbb{E}X/\|X\|_{\psi_p}\big|^p\Big)
=\mathbb{E}\exp\Big(\big|\mathbb{E}X/\|X\|_{\psi_p}\big|^p\Big),
$$
which means that $\|\mathbb{E}X\|_{\psi_p}\le \|X\|_{\psi_p}$. In consequence, for  $p$-sub-exponential random variable, we have
\begin{equation}
\label{cent}
\|X-\mathbb{E}X\|_{\psi_p}\le 2\|X\|_{\psi_p}\quad (p\ge 1).
\end{equation}

$1$-sub-exponential (simply sub-exponential) random variables  will play a special role in our considerations. Sub-exponential random variable $X$ with mean zero can be defined by finiteness of $\tau_{\varphi_1}$-norm, i.e.,
$$
\tau_{\varphi_1}(X)=\inf\{K>0:\;\ln\mathbb{E}\exp(tX)\le\varphi_\infty(Kt)\}<\infty;
$$
where $\varphi_\infty(x)=x^2/2$ for $|x|\le 1$ and $\varphi_\infty(x)=\infty$ otherwise;
see the definition of $\tau_{\varphi_p}$-norm in \cite{Zaj}, compare Vershynin \cite[Prop.2.7.1]{Ver}. Let us emphasize that the norms $\|\cdot\|_{\psi_1}$ and 
$\tau_{\varphi_1}(\cdot)$ are equivalent in the space of centered sub-exponential random variables (compare \cite[Th.2.7]{Zaj}). 
\begin{exa}
\label{przy1}
If $X$ is a exponentially distributed random variable with the parameter $1$ 
then $\mathbb{E}X=1$.
Let us note that the cumulant generating function of $X-\mathbb{E}X=X-1$ equals 
$
\ln\mathbb{E}\exp(X-1)=-t-\ln(1-t).
$
Since $C_{X-1}(0)=0$ and $C_{X-1}^\prime(0)=0$, by the Taylor formula, we get
\begin{equation}
\label{tau}
C_{X-1}(t)=\frac{1}{2}C^{\prime\prime}_{X-1}(\theta_tt)t^2\quad (|t|<1)
\end{equation}
for some $\theta_t\in(0,1)$. Let us notice that $C_{X-1}^{\prime\prime}(t)=1/(1-t)^2$ and it is an increasing function for $|t|<1$. Let us observe now that  
$\varphi_\infty(Kt)=K^2t^2/2$ if $|t|\le 1/K$ and $\infty$ otherwise.  By (\ref{tau}) we have that the infimum $K$ such that 
$C_{X-1}(t)\le \varphi_\infty(Kt)$ satisfied the equation $C_{X-1}^{\prime\prime}(1/K)=K^2$. This means that 
$$
\frac{1}{(1-1/K)^2}=K^2
$$
is the $\tau_{\varphi_1}$-norm of $(X-1)$. Solving this equation, we get 
$\tau_{\varphi_1}(X-1)=2$.

\end{exa}
In the following lemma, it is shown that sub-exponential random variables possess the {\it approximate rotation invariance property}.
\begin{lem}
Let $X_1,...,X_n$ be independent sub-exponential random variables. Then
$$
\tau_{\varphi_1}^2\Big(\sum_{i=1}^n(X_i-\mathbb{E}X_i)\Big)\le \sum_{i=1}^n\tau_{\varphi_1}^2(X_i-\mathbb{E}X_i).
$$
\end{lem}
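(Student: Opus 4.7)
The plan is to exploit the independence of the $X_i$'s to split the cumulant generating function of the sum into a sum of cumulant generating functions, then apply the defining bound of $\tau_{\varphi_1}$ to each summand on a common interval around zero.

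Write $Y_i = X_i - \mathbb{E}X_i$, let $S = \sum_{i=1}^n Y_i$, and set $K_i := \tau_{\varphi_1}(Y_i)$. Fix $\varepsilon > 0$ and, using the definition of the norm as an infimum, choose $K_i' \in (K_i, K_i + \varepsilon)$ with
$$
\ln \mathbb{E}\exp(tY_i) \le \varphi_\infty(K_i' t) \qquad \text{for all } t \in \mathbb{R}.
$$
Let $K' := \bigl(\sum_{i=1}^n (K_i')^2\bigr)^{1/2}$. The key observation is that $K_i' \le K'$ for every $i$, so whenever $|t| \le 1/K'$ we also have $|t| \le 1/K_i'$, and therefore the explicit quadratic form of $\varphi_\infty$ applies to every term simultaneously.

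By independence of $Y_1,\dots,Y_n$, for such $t$,
$$
\ln \mathbb{E}\exp(tS) = \sum_{i=1}^n \ln \mathbb{E}\exp(tY_i) \le \sum_{i=1}^n \frac{(K_i' t)^2}{2} = \frac{(K' t)^2}{2} = \varphi_\infty(K' t).
$$
For $|t| > 1/K'$ the right-hand side is $+\infty$ and the inequality is automatic. Hence $\tau_{\varphi_1}(S) \le K' < \bigl(\sum_{i=1}^n (K_i + \varepsilon)^2\bigr)^{1/2}$, and letting $\varepsilon \downarrow 0$ gives
$$
\tau_{\varphi_1}^2(S) \le \sum_{i=1}^n K_i^2 = \sum_{i=1}^n \tau_{\varphi_1}^2(Y_i),
$$
which is the desired inequality after recalling $Y_i = X_i - \mathbb{E}X_i$.

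The only genuinely non-routine point is the matching of domains: each individual bound $\ln \mathbb{E}\exp(tY_i) \le (K_i' t)^2/2$ holds only on $|t| \le 1/K_i'$, but one needs a single interval $|t| \le 1/K'$ on which all bounds are simultaneously usable. This is handled cleanly by the elementary fact $\max_i K_i' \le \bigl(\sum_i (K_i')^2\bigr)^{1/2}$, which also explains why the correct aggregation of the squared norms (rather than, say, the norms themselves) appears on the right-hand side. The $\varepsilon$-perturbation is a technical nuisance needed because the infimum in the definition of $\tau_{\varphi_1}$ is not asserted to be attained.
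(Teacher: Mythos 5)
Your proof is correct and follows essentially the same route as the paper: split the cumulant generating function by independence, bound each term by $\varphi_\infty(K_i t)$, and use $\max_i K_i \le \bigl(\sum_i K_i^2\bigr)^{1/2}$ to match the domains and recombine into $\varphi_\infty\bigl(\bigl(\sum_i K_i^2\bigr)^{1/2} t\bigr)$. The only difference is your $\varepsilon$-perturbation to handle the infimum possibly not being attained, which the paper skips; this is a minor point of extra care, not a different argument.
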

\begin{proof}
Denote $\tau_{\varphi_1}(X_i-\mathbb{E}X_i)$ by $K_i$, $i=1,...n$. For independent centered sub-exponential r.v.s we have
\begin{eqnarray}
\label{estinf}
\mathbb{E}\exp\Big(t\sum_{i=1}^n(X_i-\mathbb{E}X_i)\Big) & = & \prod_{i=1}^n\mathbb{E}\exp\big(t(X_i-\mathbb{E}X_i)\big)\nonumber\\
\; & \le & \prod_{i=1}^n\exp \varphi_\infty(K_it)=\exp\Big(\sum_{i=1}^n\varphi_\infty(K_it)\Big).
\end{eqnarray}
Observe that
$$
\sum_{i=1}^n\varphi_\infty(K_it)=
\left\{
\begin{array}{ccl}
\frac{1}{2}(\sum_{i=1}^nK_i^2)t^2 &  if & t\le 1/\max_iK_i,\\
\infty &  otherwise. & \;
\end{array}
\right.
$$
Since $\max_iK_i\le \sqrt{\sum_{i=1}^nK_i^2}$, we get
$$
\sum_{i=1}^n\varphi_\infty(K_it)\le \varphi_\infty\Big(\Big(\sum_{i=1}^nK_i^2\Big)^{1/2}t\Big).
$$
By the above, the estimate (\ref{estinf}) and the definition of $\tau_{\varphi_1}$-norm we obtain that 
$$
\tau_{\varphi_1}\Big(\sum_{i=1}^n(X_i-\mathbb{E}X_i)\Big)\le \Big(\sum_{i=1}^n\tau_{\varphi_1}^2(X_i-\mathbb{E}X_i)\Big)^{1/2}.
$$
\end{proof}
\begin{rem}
Let us note that if $X_i$, $i =1,\ldots, n$, are sub-exponential then $|X_i|$ are sub-exponential  too.  
The above lemma implies that
\begin{equation}
\label{first}
\tau_{\varphi_1}\Big(\sum_{i=1}^n|X_i|-\sum_{i=1}^n\mathbb{E}|X_i|\Big)=\tau_{\varphi_1}\Big(|X|_1-\||X|_1\|_1\Big)
\le\sqrt{n}\max_{1\le i\le n}\tau_{\varphi_1}\Big(|X_i|-\mathbb{E}|X_i|\Big)
\end{equation}
\end{rem}
In the following example it is shown that the factor $\sqrt{n}$ on the right hand side is necessary
\begin{exa}
Let $X_i\sim Exp(1)$, $i=1,...,n$, be independent random variables. Note that the cumulant generating function of their centered sum equals $nC_{X-1}$
($X\sim Exp(1)$), i.e.,
$$
\ln\mathbb{E}\exp[t(\sum_{i=1}^nX_i-n)]=nC_{X-1}(t)=-nt-n\ln(1-t).
$$
As in Example \ref{przy1} we get
$$
nC_{X-1}(t)=\frac{n}{2}C^{\prime\prime}_{X-1}(\theta_tt)t^2\quad (|t|<1)
$$
and the $\tau_{\varphi_1}$-norm of the centered sum of $X_i$ equals $\sqrt{n}+1\sim \sqrt{n}$.
\end{exa}



By the estimate \cite[(6)]{Zaj} (i.e., $\tau_{\varphi_1}(X)\le 2\sqrt{2}\|X\|_{\psi_1}$) and the last inequality  in the proof of 
\cite[Th.2.7]{Zaj} \big(i.e., $\|X\|_{\psi_p}\le 3^{1/p}L_p \tau_{\varphi_p}(X)$\big), taking $p=1$, we get that 
$$
\frac{1}{6}\|X\|_{\psi_1}\le \tau_{\varphi_1}(X)\le 2\sqrt{2}\|X\|_{\psi_1}.
$$
Since $\||X_i|-\mathbb{E}|X_i|\|_{\psi_1}\le 2 \|X_i\|_{\psi_1}$, $i=1,...,n$,  we can rewrite the inequality (\ref{first}) to the form 
\begin{equation}
\label{first1}
\||X|_1-\||X|_1\|_{L^1}\|_{\psi_1}\le 2C\sqrt{n}K_1,
\end{equation}
where $K_1=\max_{1\le i\le n}\|X_i\|_{\psi_1}$ and $C$ is a universal constant whose infimum is less than or equal to $12\sqrt{2}$.
 
The proof of the following proposition is similar to the proof of the upper bound in the large deviation theory (see for instance 
\cite[5.11(4)Theorem. Large deviation]{GrimStir}) but with one difference. Instead of the cumulant generating function of a given random variable, we use its upper estimate by the function $\varphi_\infty$ and, in consequence, the convex conjugate $\varphi_\infty^\ast=\varphi_1$ on its tail estimate (see \cite[Lem. 2.6]{Zaj}), where 
$$
\varphi_1(x)=
\left\{
\begin{array}{ccl}
\frac{1}{2}x^2 & {\rm if} & |x|\le 1,\\
|x|-\frac{1}{2} & {\rm if} & |x|>1.
\end{array}
\right.
$$
\begin{pro}
\label{Bern}
Let $X_i$, $i=1,...,n$, be independent  sub-exponential random variables.  
Then
$$
\mathbb{P}\Big(\Big|\frac{1}{n}\sum_{i=1}^n (X_i-\mathbb{E}X_i)\Big|\ge t\Big)\le 2\exp\Big(-n\varphi_1\Big(\frac{t}{2C_1K}\Big)\Big),
$$
where $K=\max_{1\le i\le n}\|X_i\|_{\psi_1}$ 
and $C_1$ is the universal constant 
such that\\ $\tau_{\varphi_1}(\cdot)\le C_1\|\cdot\|_{\psi_1}$.
\end{pro}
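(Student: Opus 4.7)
The plan is to follow the classical Cramér--Chernoff scheme, but with the cumulant generating functions of the centered coordinates replaced by their common upper bound $\varphi_\infty(\,\cdot\,)$ coming from the $\tau_{\varphi_1}$-norm definition. Write $Y_i=X_i-\mathbb{E}X_i$ and $S_n=\sum_{i=1}^n Y_i$. For $s>0$, I first apply Markov's inequality to $e^{sS_n}$ and use independence to obtain
$$
\mathbb{P}(S_n\ge nt)\le e^{-snt}\prod_{i=1}^n \mathbb{E}e^{sY_i}.
$$

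Next I need a uniform bound on each MGF. By the equivalence $\tau_{\varphi_1}(\,\cdot\,)\le C_1\|\cdot\|_{\psi_1}$ on centered sub-exponential variables combined with the centering inequality (\ref{cent}) I get
$$
\tau_{\varphi_1}(Y_i)\le C_1\|Y_i\|_{\psi_1}\le 2C_1\|X_i\|_{\psi_1}\le 2C_1 K.
$$
Since $\varphi_\infty$ is non-decreasing on $[0,\infty)$, the defining property of $\tau_{\varphi_1}$ yields $\mathbb{E}e^{sY_i}\le \exp\bigl(\varphi_\infty(2C_1 K s)\bigr)$ for every $s>0$ (with the right-hand side equal to $+\infty$ if $2C_1 K s>1$, in which case the bound is trivially true). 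Plugging this into the product gives
$$
\mathbb{P}(S_n\ge nt)\le \exp\bigl(-n\bigl[st-\varphi_\infty(2C_1K s)\bigr]\bigr).
$$

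Now I optimize over $s>0$. Substituting $u=2C_1 Ks$, the bracket becomes $\tfrac{t}{2C_1K}\,u-\varphi_\infty(u)$; taking the supremum in $u$ produces the Legendre conjugate $\varphi_\infty^{\ast}(t/(2C_1K))$, which by Lemma~2.6 of \cite{Zaj} equals $\varphi_1(t/(2C_1K))$. Hence
$$
\mathbb{P}(S_n\ge nt)\le \exp\Bigl(-n\,\varphi_1\bigl(t/(2C_1K)\bigr)\Bigr).
$$
The lower tail is handled by running the same argument with $-Y_i$ in place of $Y_i$ (the $\tau_{\varphi_1}$-norm and the bound (\ref{cent}) are invariant under this change of sign); summing the two one-sided estimates produces the factor $2$ in the statement.

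The calculation is fairly mechanical; the only part that needs some care is bookkeeping of constants and the verification that the convex-conjugate computation $\varphi_\infty^{\ast}=\varphi_1$ applies in both the quadratic regime $|t/(2C_1K)|\le 1$ and the linear regime $|t/(2C_1K)|>1$, so that a single bound covers both. Everything else is an immediate consequence of independence, the $\tau_{\varphi_1}$-definition, and the equivalence of the two norms on centered sub-exponential variables.
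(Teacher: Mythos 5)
Your proof is correct and follows essentially the same route as the paper: bound each centered coordinate's MGF by $\exp(\varphi_\infty(2C_1Ks))$ via the $\tau_{\varphi_1}$-definition, the norm equivalence, and the centering inequality, then run the Chernoff optimization to produce the conjugate $\varphi_\infty^\ast=\varphi_1$. The only cosmetic difference is that the paper delegates the final passage from the MGF bound and its conjugate to the two-sided tail estimate to a lemma of Buldygin--Kozachenko, whereas you carry out the Markov-inequality optimization and the symmetric lower-tail argument explicitly.
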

\begin{rem}
From now on the constant $C_1=\inf\{C>0:\;\tau_{\varphi_1}(X)\le C_1\|X\|_{\psi_1}\;{\rm for\;any}\; X\in L_{\psi_1}\}$ is the same in each occurrence. 
\end{rem}
\begin{proof}
The moment generating function of $\frac{1}{n}\sum_{i=1}^n X_i$ can be estimated as follows
\begin{eqnarray*}
\mathbb{E}\exp\Big(u\frac{1}{n}\sum_{i=1}^n (X_i-\mathbb{E}X_i)\Big)
\;&=&\prod_{i=1}^n\mathbb{E}\exp\Big(u\frac{1}{n} (X_i-\mathbb{E}X_i)\Big)\\
\;
&\le & \prod_{i=1}^n\exp\Big(\varphi_\infty \Big(\frac{1}{n}\tau_{\varphi_1}((X_i-\mathbb{E}X_i))u\Big)\Big).
\end{eqnarray*}
By \cite[(6)]{Zaj} we have that
the right hand side can be estimate as follows
$$
 \prod_{i=1}^n\exp\Big(\varphi_\infty \Big(\frac{1}{n}C_1\| X_i-\mathbb{E}X_i\|_{\psi_1}u\Big)\Big)
\le\exp\Big(n\varphi_\infty \Big(\frac{2}{n}C_1Ku\Big)\Big).
$$

The convex conjugate of the function $f(u):=n\varphi_\infty(\frac{2}{n}C_1Ku)$ equals 
\begin{eqnarray*}
f^\ast(t)&=&\sup_{u\in\mathbb{R}}\Big\{tu-n\varphi_\infty\Big(\frac{2}{n}C_1Ku\Big)\Big\}=\sup_{u>0}\Big\{tu-n\varphi_\infty
\Big(\frac{2}{n}C_1Ku\Big)\Big\}\\
\; &=& n\sup_{u>0}\Big\{\frac{t}{2C_1K}\frac{2C_1Ku}{n}-\varphi_\infty\Big(\frac{2}{n}C_1Ku\Big)\Big\}=
n\sup_{v>0}\Big\{\frac{t}{2C_1K}v-\varphi_\infty(v)\Big\}\\
\; & = & n\varphi_1(t/2C_1K); 
\end{eqnarray*}
the second equality holds since $\varphi_\infty$ is the even function, the fourth one by the substituting $v=\frac{2}{n}C_1Ku$ and the last one by definition of the convex conjugate for even functions and the equality $\varphi_\infty^\ast=\varphi_1$.
Thus, we get
$
f^\ast(t)=n\varphi_1(t/2C_1K).
$
Similarly as in \cite[Lem. 2.4.3]{BulKoz} (formally $f$ and $f^\ast$ are not $N$-function, but the proof is the same also for these functions), 
we get
$$
\mathbb{P}\Big(\Big|\frac{1}{n}\sum_{i=1}^n (X_i-\mathbb{E}X_i)\Big|\ge t\Big)\le 2\exp\Big(-n\varphi_1\Big(\frac{t}{2C_1K}\Big)\Big).
$$
\end{proof}
\begin{rem}
Let us emphasize that because 
$$
\varphi_1\big(t/(2C_1K)\big)\ge \frac{1}{2}\min\big\{t^2/(4C_1^2K^2),t/(2C_1K)\big\}, 
$$ 
then the above estimate implies a form of Bernstein's inequality for averages
$$
\mathbb{P}\Big(\Big|\frac{1}{n}\sum_{i=1}^n (X_i-\mathbb{E}X_i)\Big|\ge t\Big)\le 2\exp\Big(-\frac{n}{2}\min\Big\{\frac{t^2}{4C_1^2K^2},\frac{t}{2C_1K}\Big\}\Big);
$$
compare Vershynin \cite[Cor.2.8.3]{Ver}.
\end{rem}
\section{Proofs of the results} 
\label{sec3}

{\it Proof of Proposition \ref{KwaP}.}
Because, for $a\ge 0$, the function $a^{1/p}$ is concave on the nonnegative half-line of real numbers, then the following inequality 
\begin{equation}
\label{triK}
\big|a-b\big|\ge\big|a^{1/p}-b^{1/p}\big|^p 
\end{equation} 
holds for any $a,b\ge 0$. 

If $X_i$, $i=1,...,n$, are $p$-sub-exponential random variables then $|X_i|^p$ are the sub-exponential ones. 
Let $Y_i$ denotes $|X_i|^p$ and $Y$ be a vector $(Y_i)_{i=1}^n$. By Lemma \ref{norms} we have $\|Y_i\|_{\psi_1}=\|X_i\|_{\psi_p}^p$. Moreover $|Y|_1=|X|_p^p$ and 
$\||Y|_1\|_{L^1}=\||X|_p\|_{L^p}^p$. Substituting in (\ref{first1}) $Y$ instead of $X$ we get
$$
\||Y|_1-\||Y|_1\|_{L^1}\|_{\psi_1}=\||X|_p^p-\||X|_p\|_{L^p}^p\|_{\psi_1}\le 2C\sqrt{n}K_p^p,
$$
where $K_p:=\max_{1\le i \le n}\|X_i\|_{\psi_p}$ and $C\le 12\sqrt{2}$ is the universal constant that appeared in (\ref{first1}).

By the definition of $\psi_1$-norm and inequality (\ref{triK}) with  $a=|X|_p^p$ and $b=
\||X|_p\|_{L^p}^p$ we obtain
$$
2\ge\mathbb{E}\exp\Big(\frac{\big||X|_p^p-\||X|_p\|_{L^p}^p\big|}{2C\sqrt{n}K_p^p}\Big)\ge
\mathbb{E}\exp\Big(\frac{\big||X|_p-\||X|_p\|_{L^p}\big|^{p}}{\big[(2C\sqrt{n})^{1/p}K_p\big]^p}\Big),
$$
which means that 
$$
\big\||X|_p-\||X|_p\|_{L^p}\big\|_{\psi_p}\le (2C\sqrt{n})^{1/p}K_p. 
$$
It finishes the proof of Proposition \ref{KwaP}.

The structure of the proof of Theorem \ref{mthm}  is similar to the proof in Vershynin \cite[Th. 3.1.1]{Ver} but, apart from  Proposition \ref{Bern} and Lemma \ref{charlem}, we also use the following two technical lemmas.
\begin{lem}
\label{xalfa}
Let $x,\delta\ge 0$ and $p\ge 1$. If $|x-1|\ge \delta$ then $|x^p-1|\ge\max\{\delta,\delta^p\}$. 
\end{lem}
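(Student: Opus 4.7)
The plan is to split into the two natural cases $x \ge 1+\delta$ and $0 \le x \le 1-\delta$ (the latter being vacuous when $\delta > 1$, in which case only the first case is possible) and verify the two bounds $|x^p-1|\ge\delta$ and $|x^p-1|\ge\delta^p$ separately, taking the maximum at the end.

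For the upper case $x \ge 1+\delta$, I would use two standard convexity-type inequalities for the convex function $t\mapsto t^p$ on $[0,\infty)$. First, the tangent line inequality at $t=1$ gives $x^p \ge 1 + p(x-1)$, so that $x^p-1 \ge p(x-1) \ge x-1 \ge \delta$, using $p\ge 1$. Second, setting $y=x-1\ge\delta\ge 0$, I want $(1+y)^p \ge 1 + y^p$; this can be checked by considering $f(y) := (1+y)^p - y^p$, noting $f(0)=1$ and $f'(y) = p[(1+y)^{p-1}-y^{p-1}] \ge 0$ since $t\mapsto t^{p-1}$ is nondecreasing for $p\ge 1$. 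Hence $x^p-1 \ge y^p \ge \delta^p$.

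For the lower case $0 \le x \le 1-\delta$ (which forces $\delta\le 1$), I have $1-x^p \ge 1 - (1-\delta)^p$, and it suffices to show $1-(1-\delta)^p \ge \max\{\delta,\delta^p\}$. The bound $1-(1-\delta)^p \ge \delta$ follows immediately from $(1-\delta)^p \le 1-\delta$, which is just $t^p \le t$ for $t=1-\delta\in[0,1]$ and $p\ge 1$. The bound $1-(1-\delta)^p \ge \delta^p$ is equivalent to $(1-\delta)^p + \delta^p \le 1$, which follows by applying $t^p\le t$ to both $t = 1-\delta$ and $t=\delta$ and adding.

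There is not really a hard step; the only subtlety is remembering to treat the sub-case $\delta > 1$ correctly (it can only occur with $x\ge 1+\delta$, because $x\ge 0$ rules out $x\le 1-\delta$) so that we do not accidentally invoke $t^p\le t$ outside $[0,1]$. Everything else reduces to the two ubiquitous one-variable inequalities $(1+y)^p\ge 1+y^p$ for $y\ge 0$ and $t^p\le t$ for $t\in[0,1]$, both valid for $p\ge 1$.
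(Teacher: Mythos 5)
Your proof is correct, and it proves the same two bounds by the same kind of elementary one-variable reasoning, but the case decomposition is organized differently from the paper's. The paper splits on $\delta\le 1$ versus $\delta>1$: for $\delta\le 1$ it invokes the single inequality $|x^p-1|\ge|x-1|$ (valid for all $x\ge 0$, $p\ge 1$, stated without proof) to get $|x^p-1|\ge\delta\ge\delta^p$ at once, and for $\delta>1$ it observes that only the branch $x\ge 1+\delta$ survives and applies $(1+\delta)^p\ge 1+\delta^p$. You instead split on the two branches $x\ge 1+\delta$ and $0\le x\le 1-\delta$ of the hypothesis and verify both bounds in each branch from scratch, using the tangent-line inequality $x^p\ge 1+p(x-1)$, the superadditivity $(1+y)^p\ge 1+y^p$, and $t^p\le t$ on $[0,1]$. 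Your version is slightly longer but more self-contained, since the paper's opening claim $|x^p-1|\ge|x-1|$ is exactly the content your two branch arguments supply; the paper's version is shorter because that one inequality covers the whole $\delta\le 1$ case in one stroke. Both handle the only delicate point -- that $x\le 1-\delta$ is vacuous when $\delta>1$ -- correctly.
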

\begin{proof}
Under the above assumption on $x$ and $p$ we have: $|x^p-1|\ge |x-1|$. It means that if $|x-1|\ge\delta$ then $|x^p-1|\ge\delta$. For $0\le\delta\le 1$ we have $\delta^p\le\delta$. In consequence 
$|x^p-1|\ge\max\{\delta,\delta^p\}$ for $0\le\delta\le 1$. 

Suppose now that $\delta>1$. The condition $|x-1|\ge \delta$ is equivalent to 
$x\ge\delta+1$ if $x\ge 1$ or $x\le 1-\delta$ if $0\le x\le 1$. Let us observe that the second opportunity is not possible for $\delta>1$ and $x\ge 0$. The first one gives $x^p\ge (\delta+1)^p\ge\delta^p+1$ ($p\ge 1$) that is equivalent to $x^p-1\ge\delta^p$ for $x\ge 1$. Summing up we get
$|x^p-1|\ge\max\{\delta,\delta^p\}$ for $x,\delta\ge 0$ and $p\ge 1$.
\end{proof}
\begin{lem}
\label{lem2}
If $p\ge 2$ then $\varphi_1(\max\{\gamma,\gamma^p\})\ge \frac{1}{2}\gamma^p$ for $\gamma\ge 0$.
\end{lem}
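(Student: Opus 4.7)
The plan is a straightforward case analysis based on whether $\gamma \le 1$ or $\gamma > 1$, which is exactly the breakpoint in the piecewise definition of $\varphi_1$.

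First I would dispose of the case $0 \le \gamma \le 1$. Since $p \ge 2 \ge 1$, we have $\gamma^p \le \gamma$, so $\max\{\gamma,\gamma^p\} = \gamma \le 1$, which lies in the quadratic branch of $\varphi_1$. Thus $\varphi_1(\max\{\gamma,\gamma^p\}) = \tfrac{1}{2}\gamma^2$, and the desired inequality reduces to $\gamma^2 \ge \gamma^p$, i.e., $\gamma^{p-2} \le 1$. This holds because $0 \le \gamma \le 1$ and $p-2 \ge 0$.

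Next I would handle the case $\gamma > 1$. Here $\gamma^p \ge \gamma > 1$, so $\max\{\gamma,\gamma^p\} = \gamma^p$ lies in the linear branch of $\varphi_1$. Hence $\varphi_1(\gamma^p) = \gamma^p - \tfrac{1}{2}$, and the desired inequality $\gamma^p - \tfrac{1}{2} \ge \tfrac{1}{2}\gamma^p$ is equivalent to $\gamma^p \ge 1$, which is immediate from $\gamma > 1$.

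There is no real obstacle: the two-sided definition of $\varphi_1$ is precisely tuned to the $\max$ on the left-hand side, and the threshold $\gamma = 1$ divides the two regimes cleanly. The only thing to double-check is the boundary $\gamma = 1$, where both branches give $\varphi_1(1) = \tfrac{1}{2}$, matching $\tfrac{1}{2}\gamma^p = \tfrac{1}{2}$ with equality. Thus the statement holds for all $\gamma \ge 0$.
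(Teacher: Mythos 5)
Your proof is correct and follows essentially the same route as the paper: the same case split at $\gamma=1$, identifying which branch of $\varphi_1$ applies via the $\max$, and reducing each case to an immediate inequality. Nothing is missing.
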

\begin{proof}
By the definition of $\varphi_1$ we have
$$
\varphi_1(\max\{\gamma,\gamma^p\})=
\left\{
\begin{array}{ccl}
\frac{1}{2}\gamma^2 & {\rm if} & 0\le\gamma\le 1,\\
\gamma^p-\frac{1}{2} & {\rm if} & 1<\gamma.
\end{array}
\right.
$$
If $0\le\gamma\le 1$ then $\varphi_1(\max\{\gamma,\gamma^p\})=\frac{1}{2}\gamma^2\ge \frac{1}{2}\gamma^p$ for $p\ge 2$.\\
If $1<\gamma$ then the inequality $\varphi_1(\max\{\gamma,\gamma^p\})=\gamma^p-\frac{1}{2}>\frac{1}{2}\gamma^p$ also holds.
\end{proof}
{\it Proof of Theorem \ref{mthm}}.
Let us observe that the expression
$$
\frac{1}{n\|X_1\|_{L^p}^p}|X|_p^p-1=\frac{1}{n}\sum_{i=1}^n\Big(\frac{|X_i|^p}{\|X_1\|_{L^p}^p}-1\Big)
$$
is the sum of independent and centered sub-exponential random variables. Moreover, by condition (\ref{cent}) and Lemma \ref{norms}, we have
$$
\||X_i|^p-1\|_{\psi_1}\le 2\||X_i|^p\|_{\psi_1}=2\|X_i\|^p_{\psi_p}\le 2K^p_p.
$$
Now, by virtue of Lemma \ref{xalfa} and Proposition \ref{Bern}, we get
\begin{eqnarray}
\label{est1}
\mathbb{P}\Big(\Big|\frac{1}{n^{1/p}\|X_1\|_{L^p}}|X|_p-1\Big|\ge \delta\Big)&\le&
\mathbb{P}\Big(\Big|\frac{1}{n\|X_1\|_{L^p}^p}|X|_p^p-1\Big|\ge \max\{\delta,\delta^p\}\Big)\nonumber\\
\; &\le& 2\exp\Big(-n\varphi_1\Big(\frac{\|X_1\|_{L^p}^p\max\{\delta,\delta^p\}}{2C_1 K^p_p}\Big)\Big)\nonumber\\
\; & \le & 2\exp\Big(-n\varphi_1\Big(\frac{\|X_1\|_{L^p}^p\max\{\delta,\delta^p\}}{C K^p_p}\Big)\Big)
\end{eqnarray}
for any $C\ge 2C_1$.

The inequality
$$
2=\mathbb{E}\exp\Big(\frac{|X_i|}{\|X_i\|_{\psi_p}}\Big)^p\ge 1+\mathbb{E}\Big(\frac{|X_i|}{\|X_i\|_{\psi_p}}\Big)^p
$$
implies that $\|X_i\|_{\psi_p}^p\ge \mathbb{E}|X_i|^p=\|X_1\|_{L_p}^p$, $i=1,...,n$, and, in consequence, 
$K_p\ge \|X_1\|_{L_p}$. 
Under this condition we have
\begin{eqnarray*}
\frac{\|X_1\|_{L^p}^p\max\{\delta,\delta^p\}}{C K^p_p}
&=& \frac{1}{C}\max\Big\{\frac{\|X_1\|_{L^p}^p\delta}{K_p^p},\frac{\|X_1\|_{L^p}^p\delta^p}{K_p^p}\Big\}\\
\; &\ge & \frac{1}{C}\max\Big\{\frac{\|X_1\|_{L^p}^p\delta}{K_p^p},\Big(\frac{\|X_1\|_{L^p}^p\delta}{K_p^p}\Big)^p\Big\}
\; ({\rm since}\;\|X_1\|_{L_p}/ K_p\le 1)\\
\; &\ge& \max\Big\{\frac{\|X_1\|_{L^p}^p\delta}{CK_p^p},\Big(\frac{\|X_1\|_{L^p}^p\delta}{CK_p^p}\Big)^p\Big\} \;({\rm assuming}\; C\ge 1 ).
\end{eqnarray*}
By the definition of $\varphi_1$ and Lemma \ref{lem2} with $\gamma= \|X_1\|_{L^p}^p\delta/(CK_p^p)$ we get
$$
\varphi_1\Big(\frac{\|X_1\|_{L^p}^p\max\{\delta,\delta^p\}}{C K^p_p}\Big)\ge \varphi_1\Big(\max\Big\{\frac{\|X_1\|_{L^p}^p\delta}{CK_p^p},\Big(\frac{\|X_1\|_{L^p}^p\delta}{CK_p^p}\Big)^p\Big\}\Big)\ge\frac{1}{2}
\Big(\frac{\|X_1\|_{L^p}^p\delta}{CK_p^p}\Big)^p.
$$
Rearranging (\ref{est1}) and applying the above estimate, we obtain the following 
\begin{eqnarray*}
\mathbb{P}\Big(\Big||X|_p-n^{1/p}\|X_1\|_{L^p}\Big|\ge n^{1/p}\|X_1\|_{L^p}\delta\Big)
&=& \mathbb{P}\Big(\Big|\frac{1}{n^{1/p}\|X_1\|_{L^p}}|X|_p-1\Big|\ge \delta\Big)\\
\; &\le& \mathbb{P}\Big(\Big|\frac{1}{n^{1/p}\|X_1\|_{L^p}}|X|_p-1\Big|\ge \delta\Big)\\
\; &= & 2\exp\Big(-\Big(\frac{n^{1/p}\|X_1\|_{L^p}^p\delta}{2^{1/p}CK_p^p}\Big)^p\Big).
\end{eqnarray*}
Changing variables to $t=n^{1/p}\|X_1\|_{L^p}\delta$, we get the following $p$-sub-exponential tail decay
$$
\mathbb{P}\Big(\Big||X|_p-n^{1/p}\|X_1\|_{L^p}\Big|\ge t\Big)\le 2\exp\Big(-\Big(\frac{\|X_1\|_{L^p}^{p-1} t}{2^{1/p}CK_p^p}\Big)^p\Big).
$$
By Lemma \ref{charlem} and Remark \ref{rem1} we obtain
$$
\big\||X|_p-n^{1/p}\|X_1\|_{L^p}\big\|_{\psi_p}\le 6^{1/p}C\Big(\frac{K_p}{\|X_1\|_{L^p}}\Big)^{p-1}K_p,\quad{\rm for}\; p\ge 2,
$$
where $C\ge\max\{2C_1,1\}$ is the universal constant.
It finishes the proof of Theorem \ref{mthm}.
\begin{exa}
Let 
$
{\bf G}_p=(G_{p,1},...,G_{p,n})
$
be a random vector with independent standard $p$-normal coordinates ( $G_{p,i}\sim\mathcal{N}_p(0,1)$).
Recall that $\|G_{p,i}\|_{L^p}=1$ and $\|G_{p,i}\|_{\psi_p}= (8/3)^{1/p}$, for $i=1,...,n$. Thus $K_p^p=8/3$. By 
Theorem \ref{mthm} we get
$$
\||{\bf G}_p|_p-n^{1/p}\|_{\psi_p}
\le \frac{8}{3}C6^{1/p}\quad{\rm for}\; p\ge 2.
$$
\end{exa}
\begin{rem}
Many problems deal with sub-Gaussian and sub-exponential random variables may be considered in the spaces of $p$-sub-exponential random variables for any positive $p$. In the paper G\"otze et al. \cite{GSS} one can find generalizations and applications of some concentration inequalities for polynomials of such variables in cases of $0<p\le 1$. In our paper, we focus our attention on  concentrations of norms of random vectors with independent $p$-sub-exponential coordinates. 
\end{rem}

\end{document}